\date{}
\def\BState{\State\hskip-\ALG@thistlm}
\newlist{casess}{enumerate}{1}
\setlist[casess]{label=     \textbf{Case} \arabic*:}
\newcommand*{\rom}[1]{\expandafter\@slowromancap\romannumeral #1@}
\patchcmd{\ttlh@hang}{\parindent\z@}{\parindent\z@\leavevmode}{}{}
\patchcmd{\ttlh@hang}{\noindent}{}{}{}
\definecolor{mygreen}{RGB}{28,172,0} 
\definecolor{mylilas}{RGB}{170,55,241}
\newlist{Assumptions}{enumerate}{1}
\setlist[Assumptions]{label=     \textbf{Assumption} \arabic*:}
\newsavebox{\@brx}
\newcommand{\llangle}[1][]{\savebox{\@brx}{\(\m@th{#1\langle}\)}%
  \mathopen{\copy\@brx\kern-0.5\wd\@brx\usebox{\@brx}}}
\newcommand{\rrangle}[1][]{\savebox{\@brx}{\(\m@th{#1\rangle}\)}%
  \mathclose{\copy\@brx\kern-0.5\wd\@brx\usebox{\@brx}}}
\titleformat{\subsection}[runin]
       {\normalfont\bfseries}
       {\thesubsection}
       {0.5em}
       {}
       [.]
\newcommand{\A}{\mathfrak{A}}
\newcommand{\B}{\mathfrak{B}}
\def\e{{\sf e}}
\def\d{{\rm d}}
\def\({\left(}
\def\[{\left[}
\def\){\right)}
\def\]{\right]}
\def\G{{\sf G}}
\def\K{{\sf K}}
\def\H{{\sf H}}
\def\<{\langle}
\def\>{\rangle}
\providecommand{\norm}[1]{\lVert#1\rVert}
 \newtheorem{thm}{Theorem}[section]
 \newtheorem{cor}[thm]{Corollary}
 \newtheorem{lem}[thm]{Lemma}
 \newtheorem{prop}[thm]{Proposition}
 \theoremstyle{definition}
 \newtheorem{defn}[thm]{Definition}
 \theoremstyle{remark}
 \newtheorem{rem}[thm]{Remark}
 \newtheorem{ex}[thm]{Example}
 \numberwithin{equation}{section}
\numberwithin{equation}{section}
\begin{document}


\title{Twisted convolution algebras with coefficients in a differential subalgebra}

\author{Felipe I. Flores}

\author{Felipe I. Flores
\footnote{
\textbf{2020 Mathematics Subject Classification:} Primary 43A20, Secondary 47L65,\,47L30.
\newline
\textbf{Key Words:} symmetric Banach $^*$-algebra, differential subalgebra, $C^*$-dynamical system, crossed product, spectral invariance, inverse closed. 
}}

\maketitle

{\centering Dedicated to the memory of Marius Lauren\c{t}iu M\u{a}ntoiu (1961-2023).\par}


\begin{abstract}
Let $({\sf G},\alpha, \omega,\mathfrak B)$ be a measurable twisted action of the locally compact group ${\sf G}$ on a Banach $^*$-algebra $\mathfrak B$ and $\mathfrak A$ a differential Banach $^*$-subalgebra of $\mathfrak B$, which is stable under said action. We observe that $L^1_{\alpha,\omega}({\sf G},\mathfrak A)$ is a differential subalgebra of $L^1_{\alpha,\omega}({\sf G},\mathfrak B)$. We use this fact to provide new examples of groups with symmetric Banach $^*$-algebras. In particular, we prove that discrete rigidly symmetric extensions of compact groups are symmetric or that semidirect products ${\sf K}\rtimes{\sf H}$, with ${\sf H}$ symmetric and ${\sf K}$ compact, are symmetric.
\end{abstract}


\section{Introduction}\label{introduction}

The object of study for this paper is that of symmetric Banach $^*$-algebras (also called hermitian Banach $^*$-algebras by other authors), in particular for algebras associated to groups and their actions. Here are the precise definitions that will interest us.

\begin{defn} \label{symmetric}
A Banach $^*$-algebra $\mathfrak B$ is called {\it symmetric} if the spectrum of $b^*b$ is non-negative for every $b\in\mathfrak B$.
\end{defn}

In fact, the celebrated Shilari-Ford theorem says that a Banach $^*$-algebra $\mathfrak B$ is symmetric if and only if the spectrum of any self-adjoint element is real \cite[Theorem 11.4.1]{Pa01}. It will also be convenient to use the following standard terminology.

\begin{defn} 
A locally compact group $\G$ is called {\it symmetric} if the convolution algebra $L^1(\G)$ is symmetric.
\end{defn}

The study of symmetric Banach $^*$-algebras has a long history, most of which can be found in \cite[Chapters 10,11,12]{Pa01}. Besides being important in the general theory of Banach $^*$-algebras and spectral theory, the property of symmetry has implications on the K-theory and representation theory of the algebras that have it, making it a standing assumption in many works. It also links very strongly the algebra to its $C^*$-envelope, providing connections with the more robust study of $C^*$-algebra theory. As examples of this, we mention that it helps with the study of (algebraically) irreducible representations \cite[Lemma 3.13]{FJM}, makes the holomorphic functional calculus of the algebra the same as the one present in the universal $C^*$-completion \cite[Corollary 4]{GL06} and therefore the K-theories are equal \cite[Proposition 2.1]{Ro84}. Other applications happen in the study of time-frequency analysis \cite{GL06,GL04} and the study of automatic continuity.

The problem that concerns us here is that of determining which groups are symmetric. A characterization of the groups with this property is still missing, despite many advances. Some big classes of groups are known to have symmetric algebras: groups where all conjugacy classes are relatively compact \cite[Theorem 16]{Ku79}, compact extensions of nilpotent groups, connected groups of polynomial growth \cite{Lu79} and compactly generated groups of polynomial growth \cite[Corollary 1]{Lo01}. This last result could only be achieved by deeply describing the internal structure of compactly generated groups with polynomial growth, in a way that resembles Gromov's theorem \cite[Theorem 1]{Lo01}.

As for permanence properties, we know that open subgroups and quotients of symmetric groups are symmetric. The class of symmetric groups also includes semidirect products $\K\ltimes\H$, where $\K$ is compact and $\H$ is symmetric and central extensions where the central subgroup is open and the quotient is `rigidly symmetric' \cite{LP79}. Our work will provide a permanence property which is somewhat analogous to the last two, although our methods are quite different. It should be noted that the class of symmetric groups is very sensitive, so general (or even very nice) group extensions hardly stay inside the class. For example, Hulanicki constructed a non-symmetric group which is locally finite and $2$-solvable \cite{Hu80}.

Our ideas originate from the fact that many authors have studied the property of symmetry in the context of `generalized $L^1$-algebras' or even further generalizations (see for example \cite{FJM,Le77,LP79,Ma15,Py82}) and our main goal in this small article is to show new examples of symmetric groups by taking advantage of their investigations and the decomposition \begin{equation*}
    L^1(\G)\cong L^1_{\alpha,\omega}(\H, L^1(\K)),
\end{equation*} that occurs when $\G$ is an extension of $\K$ by $\H$, (using the kernel-by-quotient convention) developed in \cite{BS70}. We record our main result in the following theorem.

\begin{thm}
    Suppose that $\G$ is an extension of $\K$ by $\H$, with $\K$ compact and such that the quotient map $\pi:\G\to \H=\G/\K$ admits a Borel measurable section. Let $(\H,\alpha,\omega,L^1(\K))$ be the twisted action associated with this extension. Then $L^1_{\alpha,\omega}(\H, C^*(\K))$ is symmetric if and only if $\G$ is symmetric. If, in addition, we have $\omega\equiv1$, then $L^1_{\alpha}(\H, C^*(\K))$ and $L^1(\G)$ are symmetric as soon as $\H$ is symmetric.
\end{thm}

In order to prove this result, we relied heavily on the norm estimates that differential subalgebras provide -particularly $L^2(\K)$ inside $C^*(\K)$, for a compact group $\K$- and hence Section 2 is dedicated to a brief discussion on them, focusing on their properties related to spectral theory. We point out that their spectral invariance passes to convolution algebras (Proposition \ref{Coef}), something that does not occur in general (Example \ref{counter}). Section 3 then is entirely devoted to proving the general part (that is, the part involving the twist) of the main theorem and shows how to use the properties of differential subalgebras in a somewhat indirect way. We finish the article by analyzing the particular case of splitting extensions in Section \ref{suff}. This final section contains the last part of the theorem and its proof.

\section{Differential subalgebras}

Let $(\A,\norm{\cdot}_{\A})$, $(\B,\norm{\cdot}_{\B})$ be Banach $^*$-algebras, such that $\A$ is a Banach $^*$-subalgebra of $\B$, we assume that if $\A$ is unital, then $\B$ is also unital and they share the same unit. We say that $\A$ is a {\it differential subalgebra} of $\B$ if there is a constant $C>0$ such that 
\begin{equation}\label{diff}
    \norm{ab}_{\A}\leq C(\norm{a}_{\A}\norm{b}_{\B}+\norm{a}_{\B}\norm{b}_{\A}),\quad \textup{ for all } a,b\in\A.
\end{equation} The concept of differential subalgebras appeared first in the articles \cite{KS94,BC91}. One of the main uses of this property was found in \cite{GK14,GK13}, where it served to provide norm-estimates for the applications of the holomorphic functional calculus. Nature is abundant with examples of algebras with this property. The most interesting examples to us will be discussed now.

\begin{ex} \label{Schatten}
    Given a Hilbert space $\mathcal H$ and $p\in[1,\infty)$, the class of Schatten $p$-operators $S_p(\mathcal H)$ is a dense $^*$-ideal of the $C^*$-algebra of compact operators $\mathbb K(\mathcal H)$, moreover, one has $$\norm{ST}_{S_p(\mathcal H)}\leq\norm{S}_{\mathbb K(\mathcal H)}\norm{T}_{S_p(\mathcal H)}, \textup{ for all }S,T\in S_p(\mathcal H).$$ 
\end{ex}

\begin{ex} \label{hilbert}
    A proper $H^*$-algebra $\A$ is a Banach $^*$-algebra which is also a Hilbert space under the same norm and where $a^*$ is the unique element in $\A$ satisfying \begin{equation*}
        \langle ab,c\rangle=\langle b,a^*c\rangle\textup{ 
 and  }\langle ba,c\rangle=\langle b,ca^*\rangle\textup{, for all }b,c\in\A.
    \end{equation*} Then the left regular representation $\A\ni a\mapsto L_a\in \mathbb B(\A)$ is a contractive  $^*$-monomorphism. We then have $$\norm{ab}_{\A}\leq\norm{L_a}_{\mathbb B(\A)}\norm{b}_{\A}, \textup{ for all }a,b\in\A.$$ So that $\A$ is a differential subalgebra of $\B=\overline{L(A)}^{\norm{\cdot}_{\mathbb B(\A)}}\cong C^*(\A)$. In particular, when $\K$ is a compact group, equipped with the normalized Haar measure, $L^2(\K)$ becomes a proper $H^*$-algebra, with the operations inherited from $L^1(\K)$.
\end{ex}

\begin{rem}\label{remark}
    More examples include $C^1([a,b])$ on $C([a,b])$ \cite[page 470]{SS20}, some weighted subalgebras of $L^1(\G)$ \cite[Lemma 1]{Py82}, any full left Hilbert algebra on its $C^*$-completion \cite[Theorem 11.7.10]{Pa01}, or the algebras considered in Proposition \ref{Coef}.
\end{rem}

Let us now discuss one of the more important properties of differential subalgebras and the main reason why they interest us here. Naturally, ${\rm Spec}_{\A}(a)$ will denote the spectrum of an element $a\in\A$, while $\rho_\A(a)=\sup\{|\lambda|\mid \lambda\in {\rm Spec}_{\A}(a) \}$ is reserved to denote its spectral radius.

\begin{defn}
    Let $\A\subset\B$ be a continuous inclusion of Banach $^*$-algebras. We say that: 
    \begin{itemize}
        \item[(i)] $\A$ is {\it inverse-closed} in $\B$ if ${\rm Spec}_\A(a)={\rm Spec}_\B(a)$, for all $a\in\A$.
        \item[(ii)] $\A$ is {\it spectral radius preserving} in $\B$ if $\rho_\A(a)=\rho_\B(a)$, for all $a\in\A$.
    \end{itemize}
\end{defn}

It is clear that inverse-closedness can be rephrased (at least in the unital case) as `if $a\in\A$ is invertible inside of $\B$, then $a^{-1}\in \A$', which justifies the name. The next lemma is partially inspired by \cite[Lemma 3.2]{GK13}.

\begin{lem} \label{SRP}
    If $\A$ is a differential subalgebra of $\B$, then $\A$ is spectral radius preserving in $\B$. If $\A$ is also dense in $\B$, then it is spectrally invariant.
\end{lem}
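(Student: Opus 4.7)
The plan is to prove spectral radius preservation first, and then use it together with density to bootstrap to spectral invariance. For the first statement, the main observation is that the differential inequality self-improves when we plug in powers of a single element. Specifically, setting $a=b$ and iterating, one obtains
\begin{equation*}
\|a^{2n}\|_{\A} = \|a^n\cdot a^n\|_{\A} \leq 2C\,\|a^n\|_{\A}\,\|a^n\|_{\B}
\end{equation*}
for every $a\in\A$ and $n\geq 1$. Taking $(2n)$-th roots and letting $n\to\infty$, the factor $(2C)^{1/(2n)}$ tends to $1$, while $\|a^{2n}\|_{\A}^{1/(2n)}\to\rho_\A(a)$ and $\|a^n\|_{\A}^{1/(2n)}\to\rho_\A(a)^{1/2}$, $\|a^n\|_{\B}^{1/(2n)}\to\rho_\B(a)^{1/2}$. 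Hence $\rho_\A(a)\leq\rho_\A(a)^{1/2}\rho_\B(a)^{1/2}$, which gives $\rho_\A(a)\leq\rho_\B(a)$. The reverse inequality $\rho_\B(a)\leq\rho_\A(a)$ is automatic from the continuous inclusion $\A\hookrightarrow\B$, so equality follows.

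For the second statement, assume in addition that $\A$ is dense in $\B$, and let $a\in\A$ be invertible in $\B$. The plan is to approximate $a^{-1}\in\B$ by elements of $\A$ and then use the first part to correct the approximation inside $\A$. Choose a sequence $c_n\in\A$ with $c_n\to a^{-1}$ in $\B$. Then $ac_n\to 1$ in $\B$, and because $1-ac_n\in\A$ (in the unitized setting if needed), the first part gives
\begin{equation*}
\rho_\A(1-ac_n)=\rho_\B(1-ac_n)\leq\|1-ac_n\|_{\B}\longrightarrow 0.
\end{equation*}
For $n$ large enough, $\rho_\A(1-ac_n)<1$, so $1\notin\mathrm{Spec}_\A(1-ac_n)$, i.e.\ $ac_n$ is invertible in $\A$. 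Thus $a$ has a right inverse $c_n(ac_n)^{-1}\in\A$. The same argument applied to $c_na\to 1$ produces a left inverse, and the two must coincide with $a^{-1}$, which therefore lies in $\A$.

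The main technical point is really contained in the iteration of the differential inequality and the observation that spectral radius preservation, once established for \emph{arbitrary} (not necessarily self-adjoint) elements of $\A$, is strong enough to control the Neumann-type correction $1-ac_n$ even though we only have small-ness of $\|1-ac_n\|_{\B}$, not of $\|1-ac_n\|_{\A}$. A minor bookkeeping issue is the non-unital case: if $\A$ and $\B$ are not unital we pass to their unitizations and verify that the differential inequality extends (it does, since the added scalar parts only contribute terms that obey \eqref{diff} trivially), so that the above argument applies verbatim to the unitized algebras and their spectra.
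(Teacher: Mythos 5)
Your proof is correct and follows essentially the same route as the paper's: the iterated inequality $\|a^{2n}\|_{\A}\leq 2C\|a^n\|_{\A}\|a^n\|_{\B}$ combined with Gelfand's formula for the first part, and the density-plus-spectral-radius Neumann argument (as in Barnes) for the second. The only differences are cosmetic refinements on your side (making the reverse inequality $\rho_\B(a)\leq\rho_\A(a)$ explicit, handling the unitization, and producing a left inverse separately where the paper simply uses the known invertibility of $a$ in $\B$ to identify the right inverse with $a^{-1}$), none of which changes the substance.
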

\begin{proof}
   Observe that the inequality \ref{diff} implies that for all $a\in\A$, one has $$\norm{a^{2n}}_{\A}\leq 2C\norm{a^n}_{\A}\norm{a^n}_{\B}.$$ Since $\A$ and $\B$ are Banach algebras, Gelfand's formula for the spectral radius holds, so $$\rho_\A(a)=\lim_{n\to\infty} \norm{a^{2n}}_{\A}^{1/2n}\leq \lim_{n\to\infty} (2C)^{1/2n}\norm{a^n}_{\A}^{1/2n}\norm{a^n}_{\B}^{1/2n}= \rho_\A(a)^{1/2}\rho_\B(a)^{1/2},$$ hence $\rho_\A(a)=\rho_\B(a)$. For the second part, we follow the argument given in \cite[Proposition 2]{Ba00}: Suppose $a\in\A$ has an inverse $a^{-1}\in \B$. By density, there is a sequence $\{a_n\}_{n\in\mathbb N}\subset \A$ converging to $a^{-1}$ in $\B$. So $\lim_{n}\norm{1-aa_n}_\B=0$. In particular, for big enough $n$, one has 
   $$
   \rho_\A(1-aa_n)=\rho_\B(1-aa_n)\leq \norm{1-aa_n}_\B<1.
   $$ 
   Therefore, $aa_n$ is invertible in $\A$, and letting $c_n=(aa_n)^{-1}\in\A$, we see that $aa_nc_n=1$ and $a^{-1}=a_nc_n\in\A$.
\end{proof}

Let us now turn to the class of differential subalgebras we plan to study. First it becomes necessary to introduce twisted actions on Banach $^*$-algebras. If ${\B}$ is a Banach $^*$-algebra with isometric involution, we denote by ${\rm Aut}_1({\B})$ the group of its $^*$-automorphisms $L$ satisfying $\norm{L}=\norm{L^{-1}}=1$, by $\mathcal M({\B})$ its multiplier algebra and by $\mathcal{UM}({\B})$, the group of norm $1$ unitary multipliers. The following definition is taken from \cite{BS70}.

\begin{defn}\label{dynsis}
    A {\it (measurable) twisted action} is a $4$-tuple $(\G,\alpha,\omega,\A)$, where $\G$ is a locally compact group, $\A$ a Banach $^*$-algebra and $\alpha:\G\to{\rm Aut}_1({\A})$, $\omega:\G\times\G\to \mathcal{UM}({\A})$ are maps such that $\omega$ and $x\mapsto\alpha_x(a)$ are strongly Borel and satisfy \begin{itemize}
        \item[(i)] $\alpha_x(\omega(y,z))\omega(x,yz)=\omega(x,y)\omega(xy,z)$,
        \item[(ii)] $\alpha_x\big(\alpha_y(a)\big)\omega(x,y)=\omega(x,y)\alpha_{xy}(a)$,
        \item[(iii)] $\omega(x,\e)=\omega(\e,y)=1, \alpha_\e={\rm id}_{{\A}}$,
    \end{itemize} for all $x,y,z\in\G$ and $a\in\A$. $\e$ denotes the identity in $\G$. 
    When $\A$ is a $C^*$-algebra, $(\G,\alpha,\omega,\A)$ is also called a {\it twisted $C^*$-dynamical system} or just a {\it $C^*$-dynamical system} if the twist $\omega$ is trivial.
\end{defn} 

Given such a tuple, one usually forms the twisted convolution algebra $L^1_{\alpha,\omega}(\G,\A)$, consisting of all Bochner integrable functions $\Phi:\G\to\A$, endowed with the twisted convolution product 
\begin{equation}\label{convolution}
    \Phi*\Psi(x)=\int_\G \Phi(y)\alpha_y[\Psi(y^{-1}x)]\omega(y,y^{-1}x)\d y
\end{equation} and the involution \begin{equation}\label{involution}
    \Phi^*(x)=\Delta(x^{-1})\omega(x,x^{-1})^*\alpha_x[\Phi(x^{-1})^*].
\end{equation} Making $L^1_{\alpha,\omega}(\G,\A)$ a Banach $^*$-algebra under the norm $\norm{\Phi}_{L^1_{\alpha,\omega}(\G,\A)}=\int_\G\norm{\Phi(x)}_{\A}\d x$. When the twisted action is trivial (namely $\alpha\equiv {\rm id}_\A$ and $\omega\equiv1$), one recovers the projective tensor product: $L^1(\G,\A):=L^1_{{\rm id}_\A,1}(\G,\A)\cong L^1(\G)\Hat{\otimes}\A$.

\begin{defn}
    We say that a Banach $^*$-subalgebra $\A\subset \B$ is stable under the twisted action $(\G,\alpha,\omega,\B)$ if $\alpha_x(\A)\subset \A$ and $\omega(x,y)\big(\A\big)\subset \A$ for all $x,y\in \G$ and $(\G,\alpha,\omega,\A)$ is a twisted action.
\end{defn}

\begin{prop} \label{Coef}
    If $\,\A$ is a dense differential subalgebra of $\B$ and stable under the twisted action $(\G,\alpha,\omega,\B)$, then $L^1_{\alpha,\omega}(\G,\A)$ is a dense differential subalgebra of $L^1_{\alpha,\omega}(\G,\B)$. In particular, $L^1_{\alpha,\omega}(\G,\A)$ is inverse-closed in $L^1_{\alpha,\omega}(\G,\B)$.
\end{prop}
\begin{proof}
    It is clear that $L^1_{\alpha,\omega}(\G,\A)\hookrightarrow L^1_{\alpha,\omega}(\G,\B)$ in a natural way. This inclusion is dense, as $\B$-valued step functions can be approximated by $\A$-valued step functions. Finally, we have
    \begin{align*}
        \norm{\Phi*\Psi}_{L^1_{\alpha,\omega}(\G,\A)}&\leq \int_\G\int_\G \norm{\Phi(y)\alpha_y\big[\Psi(y^{-1}x)\big]}_{\A}\d y \d x \\
        &\leq C\int_\G\int_\G \norm{\Phi(y)}_{\A}\norm{\Psi(y^{-1}x)}_{\B}+\norm{\Phi(y)}_{\B}\norm{\Psi(y^{-1}x)}_{\A}\d y \d x \\
        &\leq C( \norm{\Phi}_{L^1_{\alpha,\omega}(\G,\A)}\norm{\Psi}_{L^1_{\alpha,\omega}(\G,\B)}+\norm{\Phi}_{L^1_{\alpha,\omega}(\G,\B)}\norm{\Psi}_{L^1_{\alpha,\omega}(\G,\A)}).
    \end{align*} Applying Lemma \ref{SRP} concludes the proof.\end{proof}

\begin{rem} \label{counter}
    Proposition \ref{Coef} does not hold under the weaker assumption that $\A$ is a (dense) inverse-closed $^*$-subalgebra of $\B$. For instance, \cite[Theorem 4]{LP79} guarantees the symmetry of $\A=L^1_{\rm lt}(\G,{C}_0(\G))$ but \cite[Theorem 5]{LP79} provides examples of non-symmetric algebras of the form $L^1_{\alpha}(\H,\A)$. This shows that, in general, $L^1_{\rm lt}(\G,{C}_0(\G))$ is an inverse-closed subalgebra but not a differential subalgebra of $\mathbb K(L^2(\G))$.
\end{rem}

In contrast with the previous remark, let us apply our results to some honest differential subalgebras of $\mathbb K(\mathcal H)$.

\begin{cor} \label{schat}
    Let $\G$ be a symmetric locally compact group. Then $L^1(\G,S_p(\mathcal H))\cong L^1(\G)\Hat{\otimes}S_p(\mathcal H)$ is a symmetric Banach $^*$-algebra.
\end{cor}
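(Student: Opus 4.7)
The plan is to pass through the well-understood $C^*$-algebra $\mathbb K(\mathcal H)$ and then descend symmetry to the Schatten coefficients via inverse-closedness. First, by Example \ref{Schatten}, $S_p(\mathcal H)$ is a dense differential $^*$-subalgebra of $\mathbb K(\mathcal H)$. Taking the twisted action on the coefficient algebra to be trivial ($\alpha\equiv\mathrm{id}$ and $\omega\equiv 1$), the invariance hypothesis of Lemma \ref{Coef} holds vacuously, so the lemma applies and yields that $L^1(\G,S_p(\mathcal H))$ is a dense differential subalgebra, and in particular an inverse-closed $^*$-subalgebra, of $L^1(\G,\mathbb K(\mathcal H))\cong L^1(\G)\Hat{\otimes}\mathbb K(\mathcal H)$.

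Next, I would invoke a known symmetry result for generalized $L^1$-algebras (\emph{cf.}\ the references in the introduction, in particular \cite{LP79,FJM}): whenever $\G$ is symmetric and $A$ is a $C^*$-algebra, the trivial-action convolution algebra $L^1(\G,A)$ is itself a symmetric Banach $^*$-algebra. Applying this with $A=\mathbb K(\mathcal H)$ shows that $L^1(\G,\mathbb K(\mathcal H))$ is symmetric. To finish, I would observe that inverse-closedness transfers symmetry: for any self-adjoint $\Phi\in L^1(\G,S_p(\mathcal H))$ one has
\[
    \mathrm{Spec}_{L^1(\G,S_p(\mathcal H))}(\Phi)=\mathrm{Spec}_{L^1(\G,\mathbb K(\mathcal H))}(\Phi)\subset\mathbb R,
\]
which is exactly what is needed.

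The main obstacle I expect is the middle step: locating and applying the precise symmetry result for $L^1(\G,A)$ with $C^*$-algebra coefficients, since the literature on generalized $L^1$-algebras imposes varying technical hypotheses (existence of approximate units, non-unitality, etc.) on $A$ and one must verify that $\mathbb K(\mathcal H)$ fits the invoked framework. The remaining steps are essentially formal consequences of Lemma \ref{Coef} together with the straightforward observation that an inverse-closed $^*$-subalgebra of a symmetric Banach $^*$-algebra is itself symmetric.
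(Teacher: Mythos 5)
Your proposal is correct and follows essentially the same route as the paper: differential subalgebra $\Rightarrow$ spectral invariance via Lemmas \ref{Coef} and \ref{SRP}, then symmetry of $L^1(\G,\mathbb K(\mathcal H))$ for symmetric $\G$, which the paper supplies precisely via \cite[Theorem 1]{Ku79} (Kugler), settling the middle step you flagged as the main obstacle.
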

\begin{proof}
    $S_p(\mathcal H)$ is a dense differential subalgebra of $\mathbb K(\mathcal H)$, so Proposition \ref{Coef} and Lemma \ref{SRP} apply and we get $${\rm Spec}_{L^1(\G,S_p(\mathcal H))}(\Phi)={\rm Spec}_{L^1(\G,\mathbb K(\mathcal H))}(\Phi), \textup{ for all }\Phi\in L^1(\G,S_p(\mathcal H)).$$ But $L^1(\G,\mathbb K(\mathcal H))$ is symmetric when $L^1(\G)$ is symmetric \cite[Theorem 1]{Ku79}.
\end{proof}

\begin{rem}
    If $\mathfrak D$ is a symmetric Banach $^*$-algebra, then the projective tensor product $\mathfrak{D}\,\Hat{\otimes}{M}_n(\mathbb C)$ gives a symmetric Banach $^*$-algebra \cite[Theorem 11.4.2]{Pa01}. We see Corollary \ref{schat} (together with the symmetry of $L^1(\G)\Hat{\otimes}\mathbb K(\mathcal H)$) as an infinite-dimensional version of this fact.
\end{rem}

\begin{rem}
    Its easy to replicate the technique used in Corollary \ref{schat} in the cases mentioned in Remark \ref{remark}, under different assumptions. We will leave that to any reader interested in such applications.
\end{rem}

\section{On the extension of a compact group}\label{flocinor}

Let $\G$ be a (Hausdorff) locally compact group, $\K$ a normal, compact subgroup and set $\H=\G/\K$. We consider these groups to be fixed and the goal of this section is to use the methods found in the previous section to establish conditions on $\H$ that guarantee the symmetry of $L^1(\G)$. Following \cite{BS70}, we will describe how to decompose \begin{equation*}\label{decomp}
    L^1(\G)\cong L^1_{\alpha,\omega}(\H, L^1(\K)),
\end{equation*} for an appropriate twisted action $(\H,\alpha,\omega, L^1(\K))$. Indeed, we will suppose the existence of $\eta$, a Borel measurable right inverse of $\pi:\G\to \H=\G/\K$, with $\eta(\K)=\e$ and use it to define $$\tau(x,y)=\eta(x)\eta(y)\eta(xy)^{-1}\in\K,$$ for $x,y\in\H$. Then for $f\in L^1(\K)$ the relevant formulas that define the twisted action are
\begin{align}\label{action1}
    \alpha_x(f)(t)&=f\big(\eta(x)^{-1}t\eta(x)\big), \\ \label{action2}
    [\omega(x,y)f](t)=f(\tau(x,y)^{-1}t)&\textup{   and   } [f\omega(x,y)](t)=f(t\tau(x,y)^{-1}),
\end{align} for $t\in \K$ and $x,y\in\H$. 

It is fairly elementary to show that this twisted action extends to $C^*(\K)$ and also restricts to $L^2(\K)$ and more generally to $L^p(\K)$. Let us now prove the main theorem of the section.

\begin{thm} \label{main}
    Let $(\H,\alpha,\omega, L^1(\K))$ defined as above. If $L^1_{\alpha,\omega}(\H, C^*(\K))$ is a symmetric Banach $^*$-algebra, then $L^1(\G)\cong L^1_{\alpha,\omega}(\H, L^1(\K))$ is also symmmetric.
\end{thm}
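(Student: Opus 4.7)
The plan is to sandwich $L^1(\G)$ between two algebras of the form $L^1_{\alpha,\omega}(\H,\cdot)$ with coefficients in a dense differential $^*$-subalgebra of $C^*(\K)$, and then pass inverse-closedness up the chain. Concretely, I would produce the continuous inclusions
$$L^1_{\alpha,\omega}(\H, L^2(\K)) \,\subset\, L^1_{\alpha,\omega}(\H, L^1(\K)) \,\subset\, L^1_{\alpha,\omega}(\H, C^*(\K))$$
in which the leftmost term is simultaneously a dense differential subalgebra of the rightmost and a dense $^*$-ideal of the middle.

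First I would verify that $L^2(\K)$ with the convolution product is a proper $H^*$-algebra (Peter--Weyl), so by Example~\ref{hilbert} it is a dense differential subalgebra of its $C^*$-completion, which for compact $\K$ is $C^*(\K)$. The formulas \eqref{action1}--\eqref{action2} show that $\alpha_x$ and the one-sided multiplications by $\omega(x,y)$ act on $L^1(\K)$ as Haar-preserving substitutions, so they restrict to isometries of $L^2(\K)$; hence $L^2(\K)$ is $(\alpha,\omega)$-invariant. Lemma~\ref{Coef} then gives that $L^1_{\alpha,\omega}(\H, L^2(\K))$ is a dense differential subalgebra of $L^1_{\alpha,\omega}(\H, C^*(\K))$, and Lemma~\ref{SRP} makes it inverse-closed there.

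Next, Young's inequality on the compact group $\K$ yields $L^1(\K)\ast L^2(\K)\subset L^2(\K)$ (and symmetrically), so $L^2(\K)$ is a two-sided $^*$-ideal in $L^1(\K)$, and this lifts to $L^1_{\alpha,\omega}(\H, L^2(\K))$ being a dense $^*$-ideal of $L^1_{\alpha,\omega}(\H, L^1(\K))\cong L^1(\G)$. The key transfer step is then an elementary observation: whenever $J \subset \mathfrak{M} \subset \mathfrak{N}$ is a chain of continuous inclusions of Banach $^*$-algebras with $J$ a dense $^*$-ideal of $\mathfrak{M}$ and $J$ inverse-closed in $\mathfrak{N}$, the intermediate algebra $\mathfrak{M}$ is also inverse-closed in $\mathfrak{N}$. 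Indeed, given $a \in \mathfrak{M}$ with $a^{-1} \in \mathfrak{N}$, pick $b_n \in J$ converging to $a^{-1}$ in $\mathfrak{N}$; then $ab_n \in \mathfrak{M}\cdot J \subset J$ and $\|1 - ab_n\|_{\mathfrak{N}} \to 0$, so for large $n$ the Neumann series provides $(ab_n)^{-1}\in\mathfrak{N}$, which must lie in $J$ by inverse-closedness, whence $a^{-1} = b_n(ab_n)^{-1} \in J \subset \mathfrak{M}$.

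Applying this transfer with $J = L^1_{\alpha,\omega}(\H, L^2(\K))$, $\mathfrak{M} = L^1(\G)$, and $\mathfrak{N} = L^1_{\alpha,\omega}(\H, C^*(\K))$, I conclude that $L^1(\G)$ is inverse-closed in $L^1_{\alpha,\omega}(\H, C^*(\K))$, and then symmetry descends: every $\Phi^*\Phi$ with $\Phi\in L^1(\G)$ has the same spectrum in both algebras, which is contained in $[0,\infty)$ by hypothesis. The step I expect to require the most care is the invariance check for the cocycle, namely that each multiplier $\omega(x,y)$ really stabilizes the Banach $^*$-algebra $L^2(\K)$ and not just $L^1(\K)$; this is clean once one unpacks \eqref{action2} as translation by $\tau(x,y)\in\K$, but it is the point where compactness of $\K$ enters in a crucial way.
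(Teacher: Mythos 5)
Your first two paragraphs track the paper's own argument correctly: $L^2(\K)$ is a proper $H^*$-algebra, hence a dense differential subalgebra of $C^*(\K)$, it is invariant under $(\alpha,\omega)$, Lemmas \ref{Coef} and \ref{SRP} make $L^1_{\alpha,\omega}(\H,L^2(\K))$ spectrally invariant in $L^1_{\alpha,\omega}(\H,C^*(\K))$, and it is also a dense $^*$-ideal of $L^1(\G)\cong L^1_{\alpha,\omega}(\H,L^1(\K))$. Up to that point you and the paper coincide.

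The gap is in your ``elementary transfer'' lemma, which is exactly the step the paper does \emph{not} treat as elementary. All the algebras involved are non-unital, so spectra live in the unitizations, and what symmetry actually requires is that $1+\Phi^*\Phi$ be invertible in $\widetilde{\mathfrak M}$ (with $\mathfrak M=L^1(\G)$) whenever it is invertible in $\widetilde{\mathfrak N}$. Run your argument on $a=1+\Phi^*\Phi$: its inverse in $\widetilde{\mathfrak N}$ has scalar part $1$, so it cannot be approximated by elements of $J$; taking instead $b_n=1+c_n$ with $c_n\in J$, one gets $ab_n=1+\bigl(\Phi^*\Phi+c_n+\Phi^*\Phi c_n\bigr)$, and the summand $\Phi^*\Phi$ lies in $\mathfrak M$ but not in $J$, so $ab_n\notin\widetilde J$ and inverse-closedness of $J$ in $\mathfrak N$ says nothing about $(ab_n)^{-1}$; the Neumann series for it converges only in the $\mathfrak N$-norm. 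Your proof therefore establishes only the $\lambda=0$ case (``$a\in\mathfrak M$ invertible in $\mathfrak N$ implies $a^{-1}\in\mathfrak M$''), which is vacuous for the spectral points that matter here. Promoting a spectral property from a dense, non-closed $^*$-ideal to the ambient Banach $^*$-algebra is a genuine theorem, not a Neumann-series observation: the paper first concludes that the ideal $L^1_{\alpha,\omega}(\H,L^2(\K))$ is itself symmetric (its spectra agree with those computed in the symmetric algebra $L^1_{\alpha,\omega}(\H,C^*(\K))$) and then invokes the Leptin--Wichmann result (\cite[Satz 1]{Le77}, \cite{Wi78}) that a Banach $^*$-algebra containing a dense symmetric $^*$-ideal is symmetric. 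If you replace your transfer lemma by that citation (or supply its proof via Shirali--Ford/Pt\'ak-type arguments), the rest of your write-up goes through.
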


\begin{proof}
    We first consider the (smaller) algebra $L^1_{\alpha,\omega}(\H, L^2(\K))$. $\A=L^2(\K)$ is a proper $H^*$-algebra, and hence a dense differential subalgebra of $C^*(\K)$. But $\A$ is also a dense differential subalgebra of $L^1(\K)$ and it then follows from Proposition \ref{Coef} that $L^1_{\alpha,\omega}(\H, L^2(\K))$ is inverse-closed in both $L^1_{\alpha,\omega}(\H, C^*(\K))$ and $L^1_{\alpha,\omega}(\H, L^1(\K))$, so we have \begin{equation*}
        {\rm Spec}_{L^1_{\alpha,\omega}(\H, L^1(\K))}(\Phi)={\rm Spec}_{L^1_{\alpha,\omega}(\H, C^*(\K))}(\Phi)={\rm Spec}_{L^1_{\alpha,\omega}(\H, L^2(\K))}(\Phi), 
    \end{equation*} for all $\Phi\in L^1_{\alpha,\omega}(\H, L^2(\K))$. In particular, we get that $L^1_{\alpha,\omega}(\H, L^2(\K))$ is a symmetric dense $^*$-ideal of $L^1_{\alpha,\omega}(\H, L^1(\K))$ and hence the latter has to be a symmetric Banach $^*$-algebra, due to \cite[page 86]{Wi78}, or \cite[Satz 1]{Le77}.
\end{proof}

The usefulness of Theorem \ref{main} lies on the fact that the study of the symmetry of $L^1$-algebras is far more developed -and easier to handle- in the case where the algebra of coefficients is a $C^*$-algebra. Let us mention some cases of interest.

\begin{ex} \label{hyper}
    The case of continuous twisted actions -that is, when $\alpha$ and $\omega$ are continuous maps- has received the most attention, so we can cite relevant results (for instance, this is always the case if $\H$ is discrete). In this case, the condition of symmetry in Theorem \ref{main} is always fulfilled when $\H$ is hypersymmetric, see \cite{FJM}. In particular, it applies when $\H$ is nilpotent.
    
    Another important case covered here could be the one of semidirect products $\G=\K\rtimes\H$. However, in this example more can be said and we will say it during the next section. As a spoiler: in Theorem \ref{semid} it is proved that is enough to only require $\H$ to be symmetric. It is known that there are symmetric but not hypersymmetric groups.
\end{ex}

As a corollary to the previous theorem, we explore different types of decay on the coefficient algebra. For $p\in[1,\infty]$, the Banach space $L^p(\K)$, of $p$-integrable functions, is a Banach $^*$-algebra when equipped with the operations \eqref{convolution}, \eqref{involution} and stable under the twisted action determined by \eqref{action1} and \eqref{action2}.

\begin{cor}\label{coro}
    Let $p\in[1,\infty]$. The algebra $L^1_{\alpha,\omega}(\H, L^p(\K))$ is symmetric if and only if $L^1_{\alpha,\omega}(\H, C^*(K))$ is symmetric.
\end{cor}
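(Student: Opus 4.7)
My plan is to use $L^2(\K)$ as an intermediate bridging $L^p(\K)$ and $C^*(\K)$. The proof of Theorem \ref{main} already yields both directions of the equivalence between symmetry of $L^1_{\alpha,\omega}(\H, L^2(\K))$ and symmetry of $L^1_{\alpha,\omega}(\H, C^*(\K))$: one direction is the inverse-closedness produced by Lemma \ref{Coef}, while the other uses that $L^1_{\alpha,\omega}(\H, L^2(\K))$ is a dense $^*$-ideal of $L^1_{\alpha,\omega}(\H, C^*(\K))$ together with the Wichmann/Leptin theorem cited there. Thus it suffices to prove, for each $p\in[1,\infty]$, that symmetry of $L^1_{\alpha,\omega}(\H, L^p(\K))$ is equivalent to symmetry of $L^1_{\alpha,\omega}(\H, L^2(\K))$.

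I would split on whether $p\le 2$ or $p>2$. Since $\K$ is compact (normalized Haar measure), we have $L^q(\K)\subseteq L^{q'}(\K)$ whenever $q\ge q'$, and this flips the inclusion between $L^p(\K)$ and $L^2(\K)$ at $p=2$. In either case the \emph{smaller} of the two spaces is a dense differential $^*$-subalgebra \emph{and} a two-sided $^*$-ideal of the larger. For $p\le 2$ we have $L^2(\K)\subseteq L^p(\K)$, and Young's inequality combined with $\|\cdot\|_1\le\|\cdot\|_p$ gives
\[
\|f\ast g\|_2\le\|f\|_p\|g\|_2\quad\text{and}\quad\|f\ast g\|_2\le\|f\|_2\|g\|_p,
\]
which yields both the estimate \eqref{diff} and the inclusions $L^p(\K)\ast L^2(\K),\,L^2(\K)\ast L^p(\K)\subseteq L^2(\K)$. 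For $p>2$ we have $L^p(\K)\subseteq L^2(\K)$, and Young together with $\|\cdot\|_1\le\|\cdot\|_2$ gives the analogous
\[
\|f\ast g\|_p\le\|f\|_p\|g\|_2\quad\text{and}\quad\|f\ast g\|_p\le\|f\|_2\|g\|_p,
\]
so that $L^p(\K)$ is a differential $^*$-subalgebra and a $^*$-ideal of $L^2(\K)$; density in the $L^2$-norm comes from $C(\K)\subseteq L^p(\K)$.

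Lemma \ref{Coef} now lifts the differential-subalgebra relation to the twisted convolution level, making the smaller of $L^1_{\alpha,\omega}(\H, L^p(\K))$ and $L^1_{\alpha,\omega}(\H, L^2(\K))$ an inverse-closed $^*$-subalgebra of the larger; a direct convolution estimate of the same style as in the proof of Lemma \ref{Coef} shows it is also a dense $^*$-ideal. Symmetry therefore transfers in both directions: from the larger algebra down to the inverse-closed subalgebra via spectral invariance, and from the ideal back up via Wichmann/Leptin. Chaining this with the equivalence recorded in the first paragraph yields the corollary.

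The main obstacle I anticipate is cosmetic rather than structural: a bookkeeping check at $p=\infty$, where one has to observe that $L^\infty(\K)$ is a Banach $^*$-algebra under convolution with isometric involution and that the twisted action \eqref{action1}--\eqref{action2} still acts isometrically on it (both taken for granted in the paragraph preceding the statement), and that the inclusion $L^\infty(\K)\subseteq L^2(\K)$ is dense in $L^2$-norm (clear from $C(\K)\subseteq L^\infty(\K)$). All the Young and Cauchy-Schwarz estimates used above survive in this limiting case without change.
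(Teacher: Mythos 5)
Your proposal is correct and follows essentially the same route as the paper: nested differential subalgebras $L^p(\K)\subseteq L^q(\K)$ lifted via Lemma \ref{Coef}, spectral invariance to pass symmetry downward, and the dense $^*$-ideal plus Wichmann/Leptin argument to pass it upward, with $L^2(\K)$ bridging to $C^*(\K)$. The only differences are organizational (the paper routes the forward direction through Theorem \ref{main} and $L^1(\K)$ rather than directly through $L^2(\K)$), and you usefully make explicit the Young-inequality estimates behind the ideal property that the paper leaves implicit.
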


\begin{proof}
    Note that if $p\geq q$, then $L^p(\K)$ is a differential subalgebra of $L^q(\K)$, so by Proposition \ref{Coef}, one has \begin{equation}\label{spekk}
        {\rm Spec}_{L^1_{\alpha,\omega}(\H, L^p(\K))}(\Phi)={\rm Spec}_{L^1_{\alpha,\omega}(\H,L^q(\K))}(\Phi), \textup{ for all }\Phi\in L^1_{\alpha,\omega}(\H, L^p(\K)).
    \end{equation}
    If $L^1_{\alpha,\omega}(\H, C^*(K))$ is symmetric, then $L^1_{\alpha,\omega}(\H, L^1(\K))$ is symmetric because of Theorem \ref{main}, hence $L^1_{\alpha,\omega}(\H, L^p(\K))$ is symmetric for all $p$.
    
    On the other hand, if $L^1_{\alpha,\omega}(\H, L^p(\K))$ is symmetric and $p\geq 2$, then by the reasoning above $L^1_{\alpha,\omega}(\H, L^p(\K))$ is a symmetric dense $^*$-ideal of $L^1_{\alpha,\omega}(\H, L^2(\K))$, which makes the latter symmetric. But the latter algebra is also a symmetric dense $^*$-ideal of $L^1_{\alpha,\omega}(\H, C^*(\K))$, hence $L^1_{\alpha,\omega}(\H, C^*(\K))$ is symmetric. If $p<2$, then it follows immediately from \eqref{spekk} that $L^1_{\alpha,\omega}(\H, L^2(\K))$ is symmetric and we can repeat the last argument.
\end{proof}

    \section{Splitting extensions}\label{suff}

We will finish the paper by exploring symmetry in the particular case of splitting group extensions. The idea is to show that the symmetry of $\H$ provides a sufficient condition for the application of Theorem \ref{main} and thus, as this condition is also necessary, we obtain a complete understanding of when the extension is symmetric. It should be noted that this is a significant improvement with respect to the results we have for the general case (cf. Example \ref{hyper}). Obviously, we keep the notations and assumptions from the previous section. In particular,  we assume the existence of $\eta$, a Borel measurable right inverse of $\pi:\G\to \H=\G/\K$, with $\eta(\K)=\e$.

\begin{thm}\label{semid}
    Suppose that the section $\eta:\H\to \G$ can be chosen to be a group homomorphism. Then $L^1(\G)$ is symmetric whenever $L^1(\H)$ is symmetric.
\end{thm}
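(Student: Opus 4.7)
First, I would observe that the hypothesis that $\eta:\H\to\G$ is a group homomorphism forces the cocycle $\tau(x,y)=\eta(x)\eta(y)\eta(xy)^{-1}$ to be identically $\e$, so by \eqref{action2} the multiplier $\omega$ defined in Section \ref{flocinor} is trivial. The twisted action thus collapses to an ordinary action $\alpha$ of $\H$ on $C^*(\K)$, and by Theorem \ref{main} it suffices to show that the untwisted $L^1$-crossed product $L^1_\alpha(\H,C^*(\K))$ is symmetric.

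The plan is then to exploit the Peter--Weyl decomposition of $C^*(\K)$. Since $\K$ is compact, $C^*(\K)$ is isomorphic to a $c_0$-direct sum of finite-dimensional matrix algebras $\operatorname{End}(V_\pi)$ indexed by $[\pi]\in\widehat\K$, and $\alpha$ permutes these blocks according to the induced action of $\H$ on $\widehat\K$. Grouping the blocks by $\H$-orbits $\{\mathcal{O}_i\}$ in $\widehat\K$ yields an $\alpha$-invariant decomposition $C^*(\K)=\bigoplus_i B_i$, where each $B_i$ is the $c_0$-direct sum of the $\operatorname{End}(V_\pi)$ with $\pi\in\mathcal{O}_i$. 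Since the $L^1$-crossed product respects such $c_0$-direct sums and spectra behave componentwise, the problem reduces to showing symmetry of each $L^1_\alpha(\H,B_i)$.

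For a single orbit $\mathcal{O}_i$, I would fix $\pi\in\mathcal{O}_i$ with stabilizer $\H_\pi\le\H$ and use an imprimitivity-type argument to relate $L^1_\alpha(\H,B_i)$ to a twisted $L^1$-algebra involving $\H_\pi$ with coefficients in $\operatorname{End}(V_\pi)$. Using that every $^*$-automorphism of a matrix algebra is inner, the fibre action can be absorbed into a scalar cocycle, leaving a twisted convolution algebra of the form $L^1_{\sigma}(\H_\pi)\Hat{\otimes}\operatorname{End}(V_\pi)$. Then \cite[Theorem 11.4.2]{Pa94}, already invoked in the paper's remarks, says that tensoring with a matrix algebra preserves symmetry, so one would reduce the orbit problem to symmetry of a (possibly twisted) $L^1$-algebra of the stabilizer $\H_\pi$.

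The main obstacle is that closed subgroups of symmetric locally compact groups need not themselves be symmetric, so a naive reduction to $\H_\pi$ does not close the argument. I expect the actual proof to bypass the stabilizer altogether by exploiting Corollary \ref{coro} and the differential chain $L^2(\K)\subset L^1(\K)$ to relate the symmetry of $L^1_\alpha(\H,C^*(\K))$ directly to that of $L^1(\H)$ via a global averaging or faithful representation argument---avoiding the Mackey decomposition entirely. Verifying that such an averaging respects the twisted convolution and preserves spectra is the delicate technical step.
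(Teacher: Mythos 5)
Your opening reduction is exactly the paper's: since $\eta$ is a homomorphism, $\tau\equiv\e$ and $\omega\equiv1$, and by Theorem \ref{main} it suffices to prove that $L^1_\alpha(\H,C^*(\K))$ is symmetric. But from that point on your argument does not close, and you say so yourself: the Peter--Weyl/orbit decomposition of $C^*(\K)$ and the Mackey-type reduction to stabilizers $\H_\pi$ founders on the fact that symmetry of $L^1(\H)$ gives you no control over (possibly twisted) $L^1$-algebras of the closed subgroups $\H_\pi$. Flagging an obstacle and then conjecturing that the real proof "bypasses it via a global averaging or faithful representation argument" is not a proof; the one concrete construction that makes the theorem work is missing.

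That construction is an \emph{untwisting} of the action rather than a decomposition of the coefficient algebra. Because $\eta$ is a homomorphism, conjugation by $\eta(x)$ gives a genuine unitary representation $U:\H\to\mathbb U(L^2(\K))$, $U_x(f)(t)=f(\eta(x)^{-1}t\eta(x))$, which together with the left regular representation $\lambda:C^*(\K)\to\mathbb B(L^2(\K))$ forms a covariant pair: $U_x\lambda(f)U_x^*=\lambda(\alpha_x(f))$. The map $\varphi(\Phi)(x)=\lambda(\Phi(x))U_x$ is then an isometric $^*$-monomorphism from $L^1_\alpha(\H,C^*(\K))$ into the \emph{trivial-action} algebra $L^1(\H,\mathbb K(L^2(\K)))\cong L^1(\H)\Hat{\otimes}\mathbb K(L^2(\K))$; the covariance relation is precisely what absorbs the $\alpha$ in the convolution. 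The target is symmetric by Kugler's theorem whenever $L^1(\H)$ is, and closed $^*$-subalgebras of symmetric Banach $^*$-algebras are symmetric, so $L^1_\alpha(\H,C^*(\K))$ is symmetric and Theorem \ref{main} finishes the argument. Note that this global embedding is available for every locally compact $\H$ and every compact $\K$, with no case analysis on orbits, no stabilizers, and no inner-automorphism bookkeeping; your instinct that the stabilizer route should be avoided was correct, but the theorem is not proved until this (or some equivalent) embedding is actually exhibited.
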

\begin{proof}
    It is clear that in this case, $\tau\equiv\e$ and consequently $\omega\equiv 1$ \eqref{action2}. Let $\lambda:C^*(\K)\to \mathbb B(L^2(\K))$ be the embedding obtained from the left regular representation. Then we define the unitary representation $U:\H\to \mathbb U(L^2(\K))$ given by \begin{equation*}
        U_x(f)(t)=f(\eta(x)^{-1}t\eta(x)), \textup{ for all }f\in L^2(\K).
    \end{equation*} Now observe that, for $f,g\in L^2(\K)$, one has \begin{align*}
        \big(U_x\lambda(f)U_x^*g\big)(t)&= \big(\lambda(f)U_x^*g\big)(\eta(x)^{-1}t\eta(x)) \\
        &=\int_\K f(s)U_x^*g(s^{-1}\eta(x)^{-1}t\eta(x))\d s \\
        &=\int_\K f(s)g(\eta(x)s^{-1}\eta(x)^{-1}t)\d s \\
        &=\int_\K \alpha_x(f)(s)g(s^{-1}t)\d s \\
        &=\lambda(\alpha_x(f))g(t)
    \end{align*}  Hence $U_x\lambda(f)U_x^*=\lambda(\alpha_x(f))$ and so the pair $(U,\lambda)$ is a covariant representation of $(\H,\alpha, C^*(\K))$. We now define an isometric $^*$-monomorphism \begin{equation*}
    \varphi:L_\alpha^1(\H,C^*(\K))\to L^1(\H,\mathbb K(L^2(\K))) \quad\textup{by}\quad\varphi(\Phi)(x)=\lambda(\Phi(x))U_x.
\end{equation*} Note that $\lambda(\Phi(x))\in C^*(\K)\subset \mathbb K(L^2(\K))$ and that makes $\varphi(\Phi)$ a Bochner measurable function. It is clear that $\varphi$ preserves the $L^1$-norm and it is therefore well-defined. We now verify its algebraic properties as a $^*$-homomorphism. In fact, we see that \begin{align*} \varphi(\Phi)*\varphi(\Psi)(x)&=\int_{\,\H}\lambda(\Phi(y))U_y\lambda(\Phi(y^{-1}x))U_{y^{-1}x}\d y \\
    &=\int_{\,\H}\lambda(\Phi(y))\lambda(\alpha_x(\Phi(y^{-1}x)))U_x \d y \\
    &=\lambda\Big(\int_{\,\H} \Phi(y)\alpha_x(\Phi(y^{-1}x))\d y \Big)U_x \\
    &=\varphi(\Phi*\Psi)(x)
\end{align*} and that $$\varphi(\Phi^*)(x)= \Delta(x^{-1})U_x\lambda(\Phi(x^{-1}))^*=\Delta(x^{-1})\varphi(\Phi)(x^{-1})^*=\varphi(\Phi)^*(x).$$ Therefore we can identify $L_\alpha^1(\H,C^*(\K))$ with a closed $^*$-subalgebra of $L^1(\H,\mathbb K(L^2(\K)))$. The latter is symmetric by \cite[Theorem 1]{Ku79}, so the former has to be symmetric. Then Theorem \ref{main} implies that $L^1(\G)$ is symmetric.
\end{proof} 

\begin{cor}
    In the setting of the previous theorem, we have that $\G$ is symmetric if and only if $\H$ is symmetric. 
\end{cor}

\begin{ex} \label{wreath}
    Let $\K$ be a compact group and $\H$ be a discrete group acting on a set $\Omega$. We denote by $\K^\Omega$ the direct product of $\K$ with itself, indexed by $\Omega$. We then can form the group $\G=\K^{\Omega}\rtimes\H$, where $\H$ acts by shifting coordinates and Theorem \ref{semid} then implies that this group is symmetric exactly when $\H$ is symmetric. 
\end{ex}

\section*{Acknowledgements}

The author has been partially supported by the NSF grant DMS-2000105. He is also grateful to professor Ben Hayes for his very helpful comments.

\printbibliography

\bigskip
\bigskip
ADDRESS

\smallskip
Felipe I. Flores

Department of Mathematics, University of Virginia,

114 Kerchof Hall. 141 Cabell Dr,

Charlottesville, Virginia, United States

E-mail: hmy3tf@virginia.edu

\end{document}